\theoremstyle{plain}
\newtheorem{theorem}{Theorem}
\newtheorem{lemma}{Lemma}
\theoremstyle{remark}
\newtheorem{remark}{Remark}
\theoremstyle{definition}
\theoremstyle{remark}
\def\newtheta{\tilde\vartheta_n}
\def\rawcf{\lambda}
\def\bx{\mathbf{x}}
\def\abs#1{\vert#1\vert}
\def\var{\text{var}}
\begin{document}

\title{On the validity of the formal Edgeworth expansion for posterior densities}

\author{
John E. Kolassa \\
Department of Statistics and Biostatistics \\
Rutgers University \\
\url{kolassa@stat.rutgers.edu} \\
\mbox{} \\
Todd A. Kuffner \\
Department of Mathematics \\
Washington University in St. Louis \\
\url{kuffner@wustl.edu} \\
}
%\date{\today}

\maketitle

\begin{abstract}
We consider a fundamental open problem in parametric Bayesian theory, namely the validity of the formal Edgeworth expansion of the posterior density. While the study of valid asymptotic expansions for posterior distributions constitutes a rich literature, the validity of the formal Edgeworth expansion has not been rigorously established. Several authors have claimed connections of various posterior expansions with the classical Edgeworth expansion, or have simply assumed its validity. Our main result settles this open problem. We also prove a lemma concerning the order of posterior cumulants which is of independent interest in Bayesian parametric theory. The most relevant literature is synthesized and compared to the newly-derived Edgeworth expansions. Numerical investigations illustrate that our expansion has the behavior expected of an Edgeworth expansion, and that it has better performance than the other existing expansion which was previously claimed to be of Edgeworth-type.
\medskip

\emph{Keywords and phrases:} Posterior; Edgeworth expansion; Higher-order asymptotics; Cumulant expansion.
\end{abstract}

\section{Introduction}
\label{sec:intro}

The Edgeworth series expansion of a density function is a fundamental tool in classical asymptotic theory for parametric inference. Such expansions are natural refinements to first-order asymptotic Gaussian approximations to large-sample distributions of suitably centered and normalized functionals of sequences of random variables, $X_{1}, \ldots, X_{n}$. Here, $n$ is the available sample size, asymptotic means $n \rightarrow \infty$, and first-order means that the approximation using only the standard Gaussian distribution incurs an absolute approximation error of order $O(n^{-1/2})$. The term formal in conjunction with Edgeworth expansions means that derivation of the expansion begins by expanding the log characteristic function, and then utilizes Fourier inversion to obtain the corresponding density \citep[p. 280]{BNCox:1979}. The coefficients of such expansions are expressed in terms of cumulants of the underlying density, together with a set of orthogonal basis functions for a suitably general hypothesized function space for the density being approximated. Asymptotic expansions are said to be valid if the absolute approximation error incurred, as an order of magnitude in $n$, by truncating the series expansion after a finite number of terms, is of the same asymptotic order as the first omitted term. The validity of the formal Edgeworth expansion is of foundational importance, in the sense that this implies a certain degree of regularity of the statistical model, and the expansion itself offers deeper insights into the finite sample performance of many frequentist inference procedures, such as those based on the likelihood. Applying standard arguments to justify term-by-term integration of the truncated Edgeworth expansion for a density yields the corresponding Edgeworth approximation for the cumulative distribution function. Such expansions are essential to studying the coverage accuracy of confidence sets, as well as establishing higher-order relationships between different methods for constructing such approximate confidence sets. Our understanding of the bootstrap has also been greatly enhanced by studying connections with Edgeworth expansions \citep{Hall:1992}. 

In contrast to central limit theorems in the frequentist context, the large-sample Gaussian approximation of the posterior distribution of a suitably centered and normalized parameter is typically justified using a Bernstein-von Mises theorem. Such theorems establish stochastic convergence of the total variation distance between the sequence of posterior distributions and an appropriate Gaussian distribution, where stochastic convergence is with respect to the true distribution from which samples are independently drawn. The exact form depends on the centering statistic and its corresponding variance estimate. For a lucid discussion, see \citet[Ch. 10]{vanderVaart:1998}.

Somewhat surprisingly, the validity of formal Edgeworth expansion for the posterior density, arising from a Bayesian analysis, has not previously been established. This is less surprising when the challenging nature of this problem is understood. The term valid in the context of posterior expansions means that, when approximating the posterior by truncating the series, the absolute error is uniformly of the proper order on a set of parameter values whose posterior probability does not go to zero. While some authors have studied related expansions, or made claims about the similarity of such expansions to classical Edgeworth expansions, to our knowledge there is no existing proof of the validity of the formal Edgeworth series expansion for posterior distributions. Apart from formal Edgeworth expansion validity being of foundational importance, we note that approximate posterior inference through higher-order asymptotics remains of interest in parametric Bayesian theory; see, for example, \citet{RSV:2014} or \citet{KharroubiSweeting:2016}. Other approximate Bayesian inference procedures, such as variational Bayes, or approximate Bayesian computation, have become popular due to their ability to circumvent computationally expensive Markov chain Monte Carlo procedures. Higher-order asymptotics offers another route to approximate Bayesian inference which, in many settings of practical interest, can be extremely accurate and inexpensive to implement.

Within the existing literature on posterior expansions, there have been two dominant approaches to obtaining approximations of posterior quantities. The common starting point is to express the posterior mean or density as a ratio of two integrals. Expansions for the numerator and denominator separately, which may be truncated and integrated to yield integral approximations, can yield valid expansions for the posterior quantity through formal division of the numerator and denominator expansions. One approach which heavily emphasizes Taylor expansion is found in \citet{Johnson:1967,Johnson:1970} and \citet{GSJ:1982}.  The most popular approach, however, is to apply Laplace's method to approximate the respective integrals, and then use the ratio of these approximations; see \citet{Lindley:1961,Lindley:1980,Davison:1986,TierneyKadane:1986}. Validity of Laplace expansions for posterior densities is considered in \citet{KTK:1990}.   

Another well-established method of posterior expansions utilizes Stein's identity;  see, for instance, \citet{Woodroofe:1989,Woodroofe:1992,Weng:2003,WengTsai:2008} and \citet{Weng:2010}. 

In this paradigm, \citet{Weng:2010} claimed to have established an Edgeworth expansion for the posterior density. Compared to other expansions, Weng's approach most closely resembles the final form of an Edgeworth expansion in that it is expressed in terms of moments, but it is not a formal Edgeworth expansion, and its structure is actually quite different from an Edgeworth expansion, as we show below.

\citet{BickelGhosh:1990}, in a paper establishing Bartlett correctability of the posterior distribution of the likelihood ratio statistic, implied that their regularity conditions would imply existence of Edgeworth expansions for the posterior of a particular functional, centered and normalized by maximum likelihood quantities. However, they never actually claimed that posterior Edgeworth expansions had been established as valid.

In fact, we will explain below that careful examination of their regularity conditions shows that they simply assume the validity of a posterior Edgeworth expansion in order to establish the validity of the posterior Bartlett correction. They do not actually prove that the Edgeworth expansion is valid.

To establish validity of the formal Edgeworth expansion for a density, it is required to show that the coefficients in the expansion, i.e. the cumulants of the statistical functional which is being approximated, are of the proper asymptotic order to ensure that the terms of the expansion have the claimed orders as powers of $n^{-1/2}$. This entails formally proving that power series expansions for those cumulants are valid. 

The existing results concerning validity of cumulant expansions are all within the sampling distribution framework, and as such are not applicable to the Bayesian setting. Numerous authors have studied expansions for posterior moments, but such results do not actually imply that the corresponding cumulants are of the proper asymptotic order. 

Edgeworth expansion relies on proper order for cumulants of the variable under investigation, after dividing by its standard deviation.  These cumulants for the standardized variable are known as invariant cumulants, and demonstrating their proper order is more delicate than demonstrating the proper order for the underlying moments.  As an example, consider the relationship between the posterior variance $\sigma^2$ and the fourth central moment $\mu'_4$.  In order for the formal Edgeworth expansion to have the proper asymptotic behavior, $\mu'_4\sigma^{-4}-3=O(1/n)$, and so $\mu'_4=\sigma^4 O(1/n)$. However, the converse -- that $\mu'_4=\sigma^4 O(1/n)$ implies $\mu'_4\sigma^{-4}-3=O(1/n)$ -- does not hold.  Hence bounds on the moments (even for central moments) cannot guarantee proper size of the invariant cumulants.  Results exploring the parameter centered at something only approximating the posterior mean (for example, the maximum likelihood estimator), and standardized by something other than the exact posterior standard deviation (for example, an approximation based on the Fisher information) will not ensure proper order for the invariant cumulants.

In this paper, we make several novel contributions. First, we prove the validity of the formal Edgeworth series expansion of the posterior density and distribution function. This requires us to prove a lemma concerning the asymptotic order of posterior cumulants, which is of independent interest, and appears to be the first rigorously established general result of this type. We also synthesize the relevant literature on posterior expansions, giving rigorous explanations of how existing Edgeworth-type expansions are not actually formal Edgeworth expansions. Finally, we provide a numerical illustration of our results. 

\section{Background}

\subsection{Posterior Expansions}

Before proceeding, we note that one could consider either analytic or stochastic expansions for posterior densities. For an analytic expansion, the observed data sequence is viewed as a subsequence of a given, fixed infinite sequence of realizations. Deriving analytic expansions amounts to showing that the posterior has certain asymptotic properties for a given, well-behaved infinite sequence of observations. The stochastic expansion viewpoint asserts that such well-behaved sequences occur with probability tending to one, with respect to the true data generating probability distribution. In this paper, we consider analytic approximations for a given  well-behaved infinite sequence of observations, though it would be possible to give analogous stochastic versions where $O(\cdot)$ terms are replaced by corresponding $O_{p}(\cdot)$ terms; see \citet{Sweeting:1995a} and \citet{KTK:1990}.

From a formal Edgeworth series perspective, existing posterior expansions are centered at the wrong place, typically either the maximum likelihood estimator or true parameter value, instead of the posterior mean or something which is approximating the posterior mean. As noted by \citet[\S 20.8]{DasGupta:2008}, the maximum likelihood estimator and posterior mean are closely related. Suppose that one observes a sequence of observations $X^{(n)}=(X_{1}, \ldots, X_{n})$, each of which are identical copies of a random variable $X$ whose distribution $P_{\theta}$ depends on a scalar parameter $\theta$. Write $P_{\theta_{0}}$ for the distribution corresponding to the true value $\theta_{0}$ under which each component of $X^{(n)}$ is generated. Let $E(\theta | X^{(n)})$ denote the posterior mean under some prior density, and let $\hat{\theta}_{n}$ be the maximum likelihood estimator for $\theta$ based on $X^{(n)}$. Under standard regularity conditions, $E(\theta | X^{(n)})-\hat{\theta}_{n}$, and also $n^{1/2}(E(\theta | X^{(n)})-\hat{\theta}_{n})$ converge in $P_{\theta_{0}}$-probability to zero. Therefore, 
\begin{equation}\label{mlepostmean}
E(\theta | X^{(n)})-\hat{\theta}_{n}=o_{p}(n^{-1/2})
\end{equation}
under $P_{\theta_{0}}$. The effects of centering in the wrong place are examined in \S~\ref{sec:comparison}. In particular, we discuss the expansions given by \citet{Weng:2010} and \citet{Hartigan:1965}. These two expansions are not Edgeworth series expansions, but for reasons explained in \S~\ref{sec:comparison}, these can be considered to have the closest relationship to our formal Edgeworth expansions. 

It may appear strange to the reader that we are claiming the Edgeworth expansion for the posterior has not been established as valid, even under regularity conditions common in the literature. After all, there is the celebrated posterior Bartlett correction of \citet{BickelGhosh:1990}, and the conventional derivation of the validity of the Bartlett correction requires a valid Edgeworth expansion. Some authors, e.g. \citet{ChangMukerjee:2006}, refer to the Bickel and Gosh regularity conditions on the Bayesian model specification \citep[p. 1078]{BickelGhosh:1990} as Edgeworth assumptions. Indeed, \citet{BickelGhosh:1990} simply assume that an Edgeworth expansion exists, but do not prove that it is valid. In particular, condition $B_{m}(iv)$ in \citet[p. 1079]{BickelGhosh:1990} is made to ensure that, when approximating the mean of the loglikelihood statistic by the coefficient on the $n^{-1}$ term in the expansion of that mean, the remainder term in the approximation is of a small enough order. A similar assumption is condition (i) of \citet[p. 755]{BGV:1985}. Such assumptions are tantamount to assuming that the Edgeworth expansion for the posterior is valid, which in turn implies that the cumulant expansions are assumed to be valid. Therefore, the validity of the Edgeworth expansion is assumed in these papers, but not formally proven. Moreover, the methods of derivation and resulting terms of these expansions disqualify them from consideration as formal Edgeworth expansions. 

Related to the approach of \citet{BickelGhosh:1990}, some authors use an expansion for the loglikelihood at the maximum likelihood estimate to obtain a posterior expansion which has some structure resembling an Edgeworth expansion; see \citet[Eq. 2.2.19]{DattaMukerjee:2004}. As with other expansions mentioned above, this is not an Edgeworth expansion for several reasons. First, it is not derived by formal expansion of the posterior characteristic function. Second, the centering is at the maximum likelihood estimate, not the posterior mean. Third, the first correction term is a linear one, which vanishes in an Edgeworth expansion. Moreover, the coefficients are not cumulants of the posterior. We further note that in \citet{BickelGhosh:1990} and \citet[Lemma 4.2.1]{DattaMukerjee:2004}, an approximation is given for the posterior characteristic function loglikelihood ratio statistic. \citet{DiCiccioStern:1993} consider approximation of the posterior moment generating function of this statistic. All of these expansions rely on regularity conditions which amount to assuming the validity of the Edgeworth expansion, though none of these papers contain proofs, nor do they actually use formal Edgeworth expansions in their arguments. A lucid discussion of frequentist and Bayesian Bartlett correction, and where the assumption of validity of Edgeworth expansions is essential, is found in \citet{DiCiccioStern:1994}.

\subsection{Validity and Formal Edgeworth Expansions for Sampling Distributions}
In the frequentist context, asymptotic expansions and their components are studied with respect to the sampling distribution of the relevant statistical functional, under hypothetical repeated sampling. \citet{Wallace:1958} provided the conventional notion of validity for an asymptotic expansion.  Suppose each function $g_{n}(y)$ in a sequence $\{g_{n}\}_{n \geq 1}$ is approximated by any partial sum of a series $\sum_{j=0}^{\infty}n^{-j/2} A_{j}(y)$, where the $A_{j}(\cdot)$ do not depend on $n$. If for some constant $C_{r}(y)$, the absolute errors satisfy
\begin{equation}
\nonumber
\Big| g_{n}(y)-\sum_{j=0}^{r}n^{-j/2}A_{j}(y) \Big| \leq n^{-(r+1)}C_{r}(y),
\end{equation}
then the asymptotic expansion is said to be valid to $r$ terms. If the constant $C_{r}(y)$ does not depend on $y$, then the asymptotic expansion is called uniformly valid in $y$. Hence, validity requires that the absolute error in the approximation, using any partial sum, is of the same order of magnitude as the first neglected term. 

Consider a scalar random variable $X$ with characteristic function $\gamma_{X}(t)=E \exp(itX)$, and denote by $X^{(n)}=(X_{1}, \ldots, X_{n})$ a sequence of independent and identically distributed copies of $X$. Suppose that it is required to approximate the density $g_{n}(y)$ of $Y_{n}=n^{1/2} s(X_{1}, \ldots, X_{n})$ for some scalar-valued function $s(\cdot)$, such that $Y_{n}$ is a centered and scaled statistic possessing an asymptotically standard normal distribution to first order. The formal Edgeworth expansion of the density $g_{n}(y)$ is derived according to the following steps; see, e.g. \citet[\S 1.5]{Jensen:1995}, \citet[Ch. 5]{McCullagh:1987}, \citet[Ch. 2]{Hall:1992}, or \citet[Ch. 3]{Kolassa:2006}. First, Taylor expand the cumulant generating function of $Y_{n}$, $\log \gamma_{Y_{n}}(t)$, in a neighborhood of zero, $|t|<cn^{1/2}$ for some $c>0$. Next, expand the Fourier inversion integral over the region $|t|<cn^{1/2}$. Then, obtain a bound on the inversion integral over the region $|t|>cn^{1/2}$. If $Y_{n}$ satisfies the assumptions of the smooth function model \citep[\S 2.4]{Hall:1992}, then one can follow the program in the references above to rigorously establish the validity of the Edgeworth expansion for $g_{n}(y)$. Other standard references for Edgeworth series expansions include \cite{Feller:1971}, \citet{BhattacharyaGhosh:1978, BhattacharyaRao:2010} and \citet{Ghosh:1994}.

To ensure that the formal Edgeworth series expansion for the density of $Y_{n}$ is valid in the sense of \citet{Wallace:1958}, it is required that the $j$th cumulant of $Y_{n}$, denoted by $\kappa_{j,n}$, is of order $n^{-(j-2)/2}$, and may expanded in a power series in $n^{-1}$:
\begin{equation} \label{eqn:cumexp}
\kappa_{j,n}=n^{-(j-2)/2}(c_{j,1}+n^{-1}c_{j,2}+n^{-2}c_{j,3}+\cdots), \quad j \geq 1 . 
\end{equation}
Since $Y_{n}$ is centered and scaled so that $\kappa_{1,n}=E(Y_{n}) \rightarrow 0$ and $\kappa_{2,n}=\var(Y_{n}) \rightarrow 1$, then $c_{1,1}=0$ and $c_{2,1}=1$. The origins of this result in the frequentist, repeated sampling setting can be traced to the combinatorial arguments of \citet{James:1955,James:1958}, \citet{JamesMayne:1962} and \citet{LeonovShiryaev:1959}.  The interested reader is referred to \citet{Withers:1982, Withers:1984}, \citet[Chapter 2]{McCullagh:1987}, \citet[Chapter 2]{Hall:1992}, \citet{Mykland:1999}, \citet{Kolassa:2006}, and \citet[Chapters 12 and 13]{StuartOrd:1994} for more details about cumulant expansions.

It is also of interest to integrate the Edgeworth series expansion of the density of $Y_{n}$ to obtain an Edgeworth expansion for its corresponding distribution function. 

Analogously to the density setting, this expansion is desired to be valid for fixed $j$ as $n \rightarrow \infty$, and the remainder should be of the stated order uniformly in $y$. Sufficient regularity conditions \citep[\S 2.2]{Hall:1992} for the validity of this expansion to order $j$ are that $E(|X|^{j+2})<\infty$ and
\begin{equation} \label{eqn:Cramer}
\limsup_{|t| \rightarrow \infty} |\gamma_{X}(t)|<1 .
\end{equation}
The latter condition is known as Cram\'{e}r's condition. 

\subsection{Conditional Expansions and Posterior Expansions}

In the frequentist sampling distribution framework concerning expansions for conditional densities, one might approach the problem by writing the conditional density as the ratio of a joint density to a marginal density. After deriving Edgeworth expansions for the numerator and denominator, formal division of these series expansions yields what is referred to as a direct-direct Edgeworth expansion for the conditional density \citep[Ch. 7]{BNCox:1989}. Proving validity for these direct-direct expansions requires the analogous proofs of validity for expansions of conditional cumulants, which are in general very difficult. Such direct-direct expansions are not the same as a direct expansion of a conditional density, but the bigger obstacle to their utility is that they are non-Bayesian in nature. Standard sampling distribution arguments do not apply when deriving an Edgeworth expansion for the posterior density of $\vartheta_{n}=n^{1/2} (\theta-\theta_0)/ \sigma$, where $\theta_0$ and $\sigma$ are the posterior expectation and standard deviation. In particular, in this posterior setting, one does not have independent and identically distributed $\theta$s, but rather a single $\theta$. Furthermore, posterior inference is conditional on a single data set, without appealing to repeated sampling arguments. An obvious point worth emphasizing is that the consideration of increasingly larger sample sizes is not the same as considering hypothetical repeated sampling. As discussed above, we are assuming that the data represent a subsequence from a fixed infinite sequence, rather than repeated random samples from a probability distribution.

A major obstacle to proving validity of posterior Edgeworth expansions is the issue of the cumulant orders. In the sampling distribution framework, there are well-known results concerning the relationship between conditional and unconditional cumulants, but these results are unfortunately of no use in the posterior framework. In particular, \citet{Brillinger:1969} established a theorem which permits computation of unconditional cumulants from conditional cumulants; see also \citet{Speed:1983} and \citet[\S 2.9 and \S 5.6]{McCullagh:1987}. However, there is no converse to Brillinger's theorem, and even if there were, one must still overcome the issue that $\theta$ is a single random variable, rather than a sequence of independent and identically distributed random variables.

Due to the challenges just mentioned, there are no general results about posterior cumulants available in the literature. \citet{PSS:1993} give some specific results regarding the form of the cumulant generating function only relevant to exponential families. \citet[p. 1145]{Hartigan:1965} alluded to the order of posterior cumulants, but was not precise about how such orders could be shown.

\section{Main results}
We consider expansions for the posterior distribution of the scalar-valued quantity $\vartheta_{n}=n^{1/2}(\theta-\theta_0)/\sigma$, for $\theta_0$ and $\sigma$ the posterior expectation and standard deviation. It is assumed throughout that an appropriate Bernstein-von Mises theorem holds for the sequence of posterior distributions of $\vartheta_{n}$. Due to the large variety of asymptotic normality results in the literature, and since our goal is to prove validity of the Edgeworth expansion in some generality, we do not discuss all of the conditions needed for the various specific Bernstein-von Mises theorems to hold. Our treatment of regularity conditions focuses on those conditions of particular relevance to the validity of the formal Edgeworth expansion. 

The first step in our analysis is to prove that the invariant posterior cumulants admit a valid power series expansion, establishing that the coefficients in the Edgeworth series expansion will have the correct asymptotic order. We then prove validity of the expansion for the posterior density and distribution function, respectively.

\subsection{The Order of Posterior Cumulants}
Given a sequence $X^{(n)}=(X_{1}, \ldots, X_{n})$ of $n$ random variables which, conditional on the value of a scalar random parameter $\theta$ taking values in a set $\Theta \subset \Re$, are independent and identically distributed according to density $f(x| \theta)$, define $L_{n}(\theta ; \bx)=\prod_{i=1}^{n}f(x_{i} | \theta)$ to be the likelihood function. Here, $\bx$ represents the observations of the sequence $X^{(n)}$. Denote the loglikelihood function by $\ell_{n}(\theta)\equiv \ell_{n}(\theta ; \bx)$. Assume that, prior to observing the data, the uncertainty about $\theta$ is described by a prior density function $\pi(\theta)$. The posterior density of $\theta$ is defined as
\[
f(\theta|\bx)= \frac{\pi(\theta)\prod_{i=1}^{n}f(x_{i}|\theta)}{\int_{\Theta}\pi(\theta)\prod_{i=1}^{n}f(x_{i}|\theta) d\theta} = \frac{L_{n}(\theta ; \bx)\pi(\theta)}{\int_{\Theta}L_{n}(\theta ; \bx)\pi(\theta)d\theta} .
\]
Throughout, we suppress the dependence on $\bx$ when there is no chance of confusion. 

\begin{lemma}\label{lem:cumulants}
Assume that the likelihood function has a single global maximizer $\hat\theta_n$.
Define the average loglikelihood $\bar\ell_n(\theta)=\ell(\theta)/n$, and assume that $\bar\ell_n$ and the log prior density have six continuous derivatives in a neighborhood of the form $\hat\theta_n\pm\epsilon$ for $\epsilon$ independent of $n$, and such that $\bar \ell_{n}(\theta)<\bar\ell_{n}(\hat\theta_n)-\delta$ for $\theta\notin(\hat\theta_n-\epsilon,\hat\theta_n+\epsilon)$, and assume that the second derivative of the average loglikelihood is bounded away from zero on this neighborhood.  Then the invariant cumulant of $\theta$ of order $j$, defined to be the cumulant of order $j$ of $(\theta-\theta_0)/\sigma$, and denoted by $\kappa_{j}$, is $O(n^{(2-j)/2})$ for $j\in\{3,4,5\}$.  Here $\theta_0$ and $\sigma$ are the expectation and standard deviation of the posterior distribution, respectively.
\end{lemma}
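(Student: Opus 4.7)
\emph{Proof proposal.}
The plan is to obtain the claimed orders by applying Laplace's method to the posterior integrals and then extracting the cumulants from an expansion of the posterior cumulant generating function. The hypotheses of the lemma—a unique interior maximum at $\hat\theta_n$, six continuous derivatives of $\bar\ell_n$ and $\log\pi$ on the neighborhood $(\hat\theta_n-\epsilon,\hat\theta_n+\epsilon)$, a second derivative of $\bar\ell_n$ bounded away from zero there, and uniform loglikelihood separation by $\delta$ on the complement—provide precisely what Laplace requires: the tail contribution from outside the neighborhood is $O(e^{-n\delta})$ and negligible, and inside it we may Taylor expand to sufficient order.

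First I would change variables to $u=n^{1/2}(\theta-\hat\theta_n)$ and Taylor expand $n\bar\ell_n(\theta)+\log\pi(\theta)$ about $\hat\theta_n$. Setting $A=-\bar\ell_n''(\hat\theta_n)>0$, the unnormalized posterior density in the $u$-variable takes the form
\[
\exp\bigl(-\tfrac{A}{2}u^{2}\bigr)\exp\bigl\{n^{-1/2}\Psi_{1}(u)+n^{-1}\Psi_{2}(u)+n^{-3/2}\Psi_{3}(u)+n^{-2}\Psi_{4}(u)+R_{n}(u)\bigr\},
\]
where each $\Psi_{r}$ is a polynomial in $u$ of degree $r+2$ whose coefficients are bounded (uniformly in $n$) functions of the derivatives of $\bar\ell_n$ and $\log\pi$ at $\hat\theta_n$, and $R_{n}(u)$ contributes at order $o(n^{-2})$ after integration. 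Crucially, $\Psi_{r}$ contains only odd powers of $u$ when $r$ is odd and only even powers when $r$ is even; this parity supplies the cancellations that the cumulant orders demand.

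Next I would expand $\exp(\sum_{r}n^{-r/2}\Psi_{r})$ as a series of polynomial corrections, integrate against the base Gaussian, take the logarithm of the resulting ratio of numerator and denominator, and obtain
\[
K_{u}(t):=\log E[e^{tu}\mid\bx]=\frac{t^{2}}{2A}+n^{-1/2}k_{1}(t)+n^{-1}k_{2}(t)+n^{-3/2}k_{3}(t)+O(n^{-2}),
\]
uniformly for $t$ in a fixed neighborhood of the origin. The $j$th cumulant of $\theta$ equals $n^{-j/2}\kappa_{j}(u)=n^{-j/2}K_{u}^{(j)}(0)$, and $\sigma^{j}=A^{-j/2}n^{-j/2}\{1+O(n^{-1})\}$, so the invariant cumulant of order $j$ equals $A^{j/2}\kappa_{j}(u)\{1+O(n^{-1})\}$. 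Because cumulants of order $\geq 2$ are translation-invariant, the offset $\theta_{0}-\hat\theta_n=O(n^{-1})$ is immaterial for $j\geq 2$, and the claim reduces to proving $\kappa_{j}(u)=O(n^{-(j-2)/2})$ for $j\in\{3,4,5\}$.

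The main obstacle is showing that $K_{u}^{(j)}(0)$ genuinely starts at order $n^{-(j-2)/2}$ rather than at the generic worst case $n^{-1/2}$. This is where the parity structure of the $\Psi_{r}$ becomes decisive. A direct bookkeeping of Gaussian moments yields: (i) $k_{1}(t)$ is an odd cubic polynomial in $t$, so $k_{1}^{(4)}(0)=k_{1}^{(5)}(0)=0$; (ii) the potentially degree-six contribution of $\Psi_{1}^{2}$ to the numerator of $E[e^{tu}\mid\bx]$ cancels exactly against the matching denominator piece upon taking the logarithm, leaving $k_{2}(t)$ an even polynomial of degree four, so that $k_{2}^{(4)}(0)$ is generically nonzero while $k_{2}^{(5)}(0)=0$; and (iii) $k_{3}(t)$ is odd with a generically nonzero coefficient of $t^{5}$. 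Combining these gives $\kappa_{3}(u)=n^{-1/2}k_{1}^{(3)}(0)+O(n^{-3/2})$, $\kappa_{4}(u)=n^{-1}k_{2}^{(4)}(0)+O(n^{-2})$, and $\kappa_{5}(u)=n^{-3/2}k_{3}^{(5)}(0)+O(n^{-5/2})$, which after converting to invariant cumulants delivers the stated bounds. The careful parity and degree accounting through order $n^{-2}$ is the delicate step, and is exactly what the six-derivative hypothesis is designed to support.
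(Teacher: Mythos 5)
Your proposal is correct in substance but takes a genuinely different route from the paper's proof. The paper centers the Taylor expansion at the posterior mean $\theta_0$ itself, forms an extended Laplace approximation to the raw posterior moments $\mu_j$ (with the tail controlled by the $\delta$-separation hypothesis and the interior error by the Taylor remainder), converts these approximate moments to cumulants via the standard moment--cumulant formulas, and then verifies \emph{case by case} that the leading terms cancel so that $\beta_3=O(n^{-2})$, $\beta_4=O(n^{-3})$, etc., before dividing by powers of $\beta_2$. The authors explicitly describe this cancellation as the most delicate step and state that they know of no way to establish it except case by case. Your parity argument --- that $\Psi_r$ has the parity of $r$, hence $k_1$ is an odd cubic, $k_2$ is even of degree four (the degree-six pieces of $\tilde E_t[\Psi_1^2]/2$ and $\tilde E_t[\Psi_1]^2/2$ cancelling into a variance of degree four), and $k_3$ is odd of degree five --- supplies exactly this cancellation systematically, and I have checked that the degree and parity bookkeeping is right: $K_u^{(3)}(0)=O(n^{-1/2})$, $K_u^{(4)}(0)=O(n^{-1})$, $K_u^{(5)}(0)=O(n^{-3/2})$, while $K_u''(0)=1/A+O(n^{-1})$ with $A$ bounded away from zero, so normalizing by the true $\sigma$ preserves the orders. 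Centering at $\hat\theta_n$ rather than $\theta_0$ is legitimate for $j\geq 2$ by translation invariance of cumulants, and is arguably cleaner than the paper's handling of the linear coefficient $p_1$. What your route buys is a structural explanation of the cancellation; what the paper's route buys is that it never leaves the space of finitely many moments.

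One point needs repair before your argument is rigorous: you define $K_u(t)=\log E[e^{tu}\mid\bx]$ and claim an expansion valid for $t$ in a fixed neighborhood of the origin, but the lemma's hypotheses do not guarantee that this moment generating function is finite for any $t\neq 0$ --- outside $(\hat\theta_n-\epsilon,\hat\theta_n+\epsilon)$ the likelihood is only bounded by $e^{n(\bar\ell_n(\hat\theta_n)-\delta)}$, so $\int e^{tn^{1/2}(\theta-\hat\theta_n)}L_n(\theta)\pi(\theta)\,d\theta$ can diverge for a heavy-tailed prior. Since the conclusion concerns only cumulants of orders $3$ through $5$, which are fixed polynomials in the moments of order at most $5$, you should carry out the identical computation with $K_u(t)$ treated as a formal power series in $t$ truncated at degree five (equivalently, apply your parity analysis to the moment--cumulant formulas directly, as the paper does with its Laplace-approximated moments). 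With that rewording the argument goes through, and nothing else in your proof depends on actual exponential integrability.
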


\begin{proof}
Arguments in the first half of this proof hold for loglikelihood functions and log prior densities with varying numbers of derivatives; denote this number by $k$, and until specified otherwise, it might, but need not be, 5.
By (\ref{mlepostmean}), there exists $N$ (potentially dependent on the sample) so that for $n\geq N$, $|\theta_0-\hat\theta_n|<\epsilon/2$, and so continuous derivatives to order $k$ exist for $\bar \ell_{n}(\theta)$ and the log prior density at $\theta_0$. 
Hence the log prior has an expansion 
\begin{equation*}
-\sum_{j=0}^k h_j(\theta-\theta_0)^j/j!+Q(\theta)(\theta-\theta_0)^{k+1}/(k+1)!,
\end{equation*}
and the loglikelihood has an expansion
\begin{equation*}
-n[\sum_{j=0}^k g_j(\theta-\theta_0)^j/j!+Q^*(\theta)(\theta-\theta_0)^{k+1}/(k+1)!],
\end{equation*}
where the coefficients $g_j$ and $h_j$ may be calculated from derivatives of the loglikelihood and log prior, respectively. 
Here $Q(\theta)$ and $Q^*(\theta)$ are the standard Taylor series remainder terms, calculated from the 
derivatives of order $k+1$ of the log prior density and average loglikelihood, respectively, evaluated at a parameter value intermediate between $\theta_0$ and $\theta$. 

The log posterior can be expressed as 
\begin{equation*}
\sum_{j=0}^k p_j(\theta-\theta_0)^j/j!+[
n Q^*(\theta)+Q(\theta)],
\end{equation*}
where $p_j=-h_j-n g_j$.
The first term $p_0$ may be chosen to make the posterior integrate to $1$.
The error terms $Q(\theta)$ and $Q^*(\theta)$ may be taken as bounded for $\theta\in(\hat\theta_n-\epsilon/2,\hat\theta_n+\epsilon/2)$.
The choice of $\theta_0$ ensures that $p_1=O(n^{1/2})$. 

Let $\omega(\theta)$ be the polynomial resulting from retaining only terms of order $k$ and smaller in the power series for $\exp(\sum_{j=3}^k p_j(\theta-\theta_0)^j)$.
Let $\mu^*$ represent the extended Laplace approximation to the posterior 
moments;
that is, 
\begin{equation*}\mu^*_j=\int_{-\infty}^\infty
\exp(-p_0)\exp(-\frac{1}{2} (\theta-\theta_0) ^2 p_2)\theta^j \omega(\theta) ~d\theta.
\end{equation*}

Let $\mu_j$ denote the true values of these moments.
Choose $\epsilon>0$ to satisfy the conditions of the lemma such that both
\begin{equation*}
|\log(\pi(\theta_0))+n\bar \ell_{n}(\theta_0)-\log(\pi(\theta))-n\bar \ell_{n}(\theta)|\leq p_2(\theta-\theta_0)^2/4,
\end{equation*}
and 
\begin{equation*}
|\sum_{j=3}^k p_j(\theta-\theta_0)^j|\leq p_2(\theta-\theta_0)^2/2 ,
\end{equation*}
for $|\theta-\theta_0|<\epsilon$. Since the difference between the maximum likelihood estimator and the posterior expectation is $O_p(1/n^{1/2})$, then there exists $\delta>0$ such that $\bar \ell_{n}(\theta)<\bar \ell_{n}(\theta_0)-\delta$ for $\theta\in(-\epsilon/2,\epsilon/2)^c$, and
the contribution from outside of the interval to the absolute approximation error $|\mu_j^*-\mu_j|$ is bounded by $\exp(-n\delta)$.
Inside $(-\epsilon/2,\epsilon/2)$, the contribution to 
$\abs{\mu_j-\mu^*_j}$ is bounded by
\begin{equation*}
\exp(-p_0)\exp(-n\frac{1}{4} (\theta-\theta_0) ^2 p_2)\left(|n Q^*(\theta)+Q(\theta)|+\left(\sum_{j=3}^k p_j(\theta-\theta_0)^j\right)^{k+1}/(k+1)!\right),
\end{equation*} 
by \citet[Theorem 2.5.3]{Kolassa:2006}, which is $O(n^{-(k+1)/2})$.
    
Take $k=5$.
In this case,
\begin{eqnarray*}
\omega(\theta)&=&1 - p_1(\theta-\theta_0)
+p_2^2(\theta-\theta_0)^2/2
-(p_1^3+p_3)(\theta-\theta_0)^3/6\\&&
+(p_1^4+4 p_3 p_1-p_4)(\theta-\theta_0)^4/24\\&&
-(\theta-\theta_0) ^5 (p_1 ^5+10 p_3
    p_1 ^2-5 p_4  p_1 +p_5 )/120.\end{eqnarray*}
Moments approximated in this way are accurate to $O(n^{-7/2})$, and cumulants approximated using
standard formulas for producing cumulants from moments \citep[p. 10]{Kolassa:2006} are accurate to the same order. Denote the cumulant of $\theta$ of order $j$ by $\beta_{j}$. The first cumulant is
$\beta_1=g_1 g_2^{-1}+(g_1 h_2 g_2^{-2}- h_1g_2^{-1})n^{-1}+O(1/n^2)$. Recall that the $g_j$ terms are the coefficients in the expansion of the average loglikelihood about the posterior mean. They are all $O(1)$ except $g_{1}$. Since the posterior mean is within $O(n^{-1/2})$ of the maximum likelihood estimate, then the choice of $\theta_0$ as the posterior mean forces $g_1=O(n^{-1/2})$.
The second cumulant is  $\beta_2=\frac{1}{2} g_2^{-1}n^{-1} +O(n^{-2})$, the third cumulant is
$\beta_3=-\frac{1}{6}g_3 g_2^{-3}n^{-2}+O(n^{-3})$, and
the fourth cumulant is $\beta_4=-\frac{1}{24}(2 g_3^2 g_2^{-5}-g_4 g_2^{-4})n^{-3}+O(n^{-7/2})$. 
The most delicate part of the argument is the calculation of these cumulants, and ensuring that larger terms cancel to leave a remainder of the proper order.  The proof is then completed by dividing the cumulants by the proper power of the second cumulant, $\beta_2$, which is $O(n^{-1})$, to see that the quotient is of the proper order. Specifically, the invariant cumulant of order $j$, $j\geq 3$, is $\kappa_{j}=\beta_{j}/\beta_{2}^{j/2}$. Since $g_2, g_3, g_4$ are bounded away from zero, the invariant cumulants of order $3$ and $4$ are $O(n^{-1/2})$ and $O(n^{-1})$ respectively.  Similar calculations show that the invariant cumulant of order $5$ is $O(n^{-3/2})$. 
\end{proof}

\begin{remark}
The argument above holds to provide bounds on moments of all orders.  Orders of cumulants are more delicate, since cumulants are expressed in terms of differences of products of moments, and proper order for cumulants requires that leading terms of the representation properly cancel.  At present we know of no way to do this except on a case-by-case basis.  This difficulty extends to bounds on invariant cumulants.
\end{remark}
\begin{remark}
In \S~\ref{sec:intro} we argued that one cannot bound the cumulants based on the order of the moments.  We are instead bounding cumulants by getting a Laplace expansion for the moments, and observing that enough leading terms cancel to show that the cumulants are of the proper order.
\end{remark}
\begin{remark}
As one would expect, the $h_j$ terms, corresponding to coefficients in the expansion of the log prior about the posterior mean, appear only in terms of order $n^{-1}$ and smaller.
\end{remark}

\subsection{Validity of Edgeworth expansion for the posterior density}
\begin{theorem}\label{thm:main}
Let $\cal A$ be a subset of the sample space.  Suppose that for any $\bx\in\cal{A}$, the following assumptions hold.
\begin{enumerate}
\item The prior is absolutely continuous with respect to Lebesgue measure.\label{c1}
\item The likelihood function $L_n(\theta ; \bx)$ is a measurable function of $\theta$.\label{c2}
\item The posterior is proper, and, for sufficiently large $n$, has a bounded density.\label{c3}
\item The likelihood function has a unique global maximizer $\hat\theta_n(\bx)$,\label{c4}
\item The loglikelihood $\ell_n(\theta)$ is $k$-times differentiable in a neighborhood of $\hat\theta_{n}(\bx)$, with average second derivative $\ell''_n(\theta(\bx),\bx)/n$ bounded away from zero, and average $j$th derivative $\ell^{(j)}_n(\theta(\bx),\bx)/n$ bounded, for $2\leq j\leq k$.\label{c5}
\end{enumerate}
Define $\vartheta_{n}=n^{1/2}(\theta-\theta_{0})/\sigma$, with $\theta_{0}$ the posterior mean and $\sigma$ the standard deviation of the posterior distribution for $\theta$. Let 
\begin{equation}\label{esdef}
e_{k,n}(\vartheta)=\phi(\vartheta)\{1+h_3(\vartheta)\kappa_3/6+\kappa_4 h_4(\vartheta)/24-\kappa_3^2 h_6(\vartheta)/72+\cdots\},
\end{equation} 
where $\kappa_j$ are cumulants of $\vartheta_{n}$, and hence the invariant cumulants of $\theta$, and $e_{k,n}$ is truncated to contain only terms with products of $\kappa_{j}$ of the form $\prod_{m=1}^{j}\kappa_{r_{m}}$, such that $\sum_{m=1}^{j}(r_{m}-2)<k$. Heuristically, $e_{k,n}$ contains terms of size larger than $O(n^{-k/2})$, where $\kappa_{j}$ satisfies
\begin{equation}
\kappa_{j}=O(n^{-(j-2)/2}), \quad j \geq 3 .
\end{equation}
Then the error in the use of $e_{k,n}(\vartheta)$ to approximate the posterior density is of order $O(n^{-k/2})$, uniformly in $\vartheta$ and uniformly in $\bx$ in a compact subset of the sample space, but not relatively.
\end{theorem}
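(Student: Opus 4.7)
The plan is to execute the classical formal Edgeworth program from \S 2.2, applied to the posterior characteristic function $\gamma_{n}(t)=E\{\exp(it\vartheta_{n}) \mid \bx\}$ in place of the sampling characteristic function. By the standardization $\kappa_{1}=0$ and $\kappa_{2}=1$, and Lemma \ref{lem:cumulants} supplies the invariant posterior cumulant orders $\kappa_{j}=O(n^{-(j-2)/2})$ for $j=3,4,5$; its Laplace-expansion argument extends under assumption \ref{c5} to the higher orders needed for the desired $k$, delivering expansions of the form (\ref{eqn:cumexp}) with remainders uniform on a compact subset of $\cal A$. First I would Taylor expand $\log\gamma_{n}(t)$ on the growing interval $|t|<cn^{1/2}$, obtaining $-t^{2}/2+\sum_{j=3}^{k+1}\kappa_{j}(it)^{j}/j!$ plus a remainder controlled by $n^{-k/2}|t|^{k+2}$.

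Next, exponentiate and re-expand in powers of $n^{-1/2}$, retaining only those monomials whose combined cumulant weight $\sum(r_{m}-2)$ is strictly less than $k$; this produces a polynomial $P_{k,n}(it)$ with coefficients of size $O(1)$ uniformly in $n$ and $\bx$. Using the Hermite identity $(-1)^{j}\phi^{(j)}(\vartheta)=H_{j}(\vartheta)\phi(\vartheta)$, the Fourier inverse of $\tilde\gamma_{n}(t)=\exp(-t^{2}/2)P_{k,n}(it)$ is exactly the $e_{k,n}(\vartheta)$ in (\ref{esdef}). Validity then reduces to bounding
\[
\frac{1}{2\pi}\int_{-\infty}^{\infty}e^{-it\vartheta}\{\gamma_{n}(t)-\tilde\gamma_{n}(t)\}\,dt
\]
uniformly in $\vartheta$, which I would estimate by the $L^{1}$ norm of $\gamma_{n}-\tilde\gamma_{n}$, automatically uniform in $\vartheta$.

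I would split this integral at $|t|=cn^{1/2}$. On the inner region, factor out $\exp(-t^{2}/2)$ and use the cumulant-remainder estimate: each additional cumulant factor contributes at least $n^{-1/2}$, while the Gaussian tames the polynomial growth in $|t|$, so the inner contribution is $O(n^{-k/2})$. For the outer region $|t|>cn^{1/2}$, the heart of the matter is a tail bound on $|\gamma_{n}(t)|$. Since the posterior has no convolution structure that can feed Cram\'er's condition, I would exploit its Laplace-type structure directly: writing $\gamma_{n}(t)$ as the integral of $\exp(n\bar\ell_{n}(\theta)+\log\pi(\theta))\exp(it\vartheta_{n}(\theta))$, integrating by parts $k+2$ times in $\theta$ using the derivatives granted by assumption \ref{c5}, and treating separately the local neighborhood of $\hat\theta_{n}$ and its complement (on which the likelihood decays as in the hypotheses of Lemma \ref{lem:cumulants}), yields a bound of the form $|\gamma_{n}(t)|\leq A|t|^{-(k+2)}$; the outer integral is then $O(n^{-(k+1)/2})$ and absorbed in the final error.

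The main obstacle is precisely this outer tail bound. In the classical i.i.d.\ sampling context, exponential smallness is delivered by Cram\'er's condition on the distribution of a single $X$, amplified through the $n$-fold convolution; in the Bayesian setting there is only one $\theta$, and decay of $\gamma_{n}$ must be extracted from smoothness of the posterior density together with the global identifiability inherent in the unique maximizer assumption. The uniform positive lower bound on $-\ell''_{n}/n$ and the uniform boundedness of its higher derivatives on a compact $\bx$-subset of $\cal A$, supplied by assumption \ref{c5}, are exactly what is needed to make the constants in both the inner and outer estimates uniform in $\bx$, yielding the uniform $O(n^{-k/2})$ bound claimed. The non-relative nature of the bound is a consequence of the $L^{1}$ control: in the far tails of $\vartheta$ both $\phi(\vartheta)$ and the true posterior density are super-exponentially small, so the absolute error estimate does not translate into a relative one.
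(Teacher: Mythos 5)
Your overall architecture --- expand $\log\gamma_n(t)$ on $|t|<cn^{1/2}$, exponentiate and truncate by cumulant weight, invert against Hermite polynomials, and split the inversion integral at $|t|=cn^{1/2}$ --- is the same as the paper's, and your treatment of the inner region (Gaussian factor taming polynomial growth, each extra cumulant contributing $n^{-1/2}$, constants uniform in $\bx$ via Assumption~\ref{c5}) matches the paper's appeal to the standard Edgeworth error analysis. You also correctly identify the outer tail bound on the posterior characteristic function as the place where the classical i.i.d.\ machinery (Cram\'er's condition amplified by $n$-fold convolution) is unavailable. The divergence, and the gap, is in how you propose to fill that hole.

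Your proposed tail bound $|\gamma_n(t)|\leq A|t|^{-(k+2)}$ via $k+2$ integrations by parts does not close as stated. First, it needs $k+2$ derivatives of the integrand $L_n(\theta)\pi(\theta)$, but Assumption~\ref{c1} gives only absolute continuity of the prior and Assumption~\ref{c5} gives only $k$ derivatives of the loglikelihood, and only locally near $\hat\theta_n$. Second, and more seriously, the constant $A$ cannot be taken free of $n$: each differentiation of $\exp(\ell_n(\theta))$ brings down a factor of $\ell_n'(\theta)=O(n)$, so $j$ integrations by parts yield $|\lambda(\tau)|\lesssim n^{j}\lambda(0)|\tau|^{-j}$ and hence $|\varphi(\zeta)|\lesssim n^{j/2}|\zeta|^{-j}$ after the rescaling $\tau=n^{1/2}\zeta$; integrating this over $|\zeta|>\delta n^{1/2}$ gives something of order $n^{1/2}$, not $O(n^{-(k+1)/2})$. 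To make the outer region small you need the genuinely exponential decay of the posterior characteristic function at frequencies of order $n^{1/2}$ and beyond, which comes from the Laplace/Gaussian concentration of $L_n\pi$ near $\hat\theta_n$, not from polynomial decay via smoothness; your parenthetical ``treating separately the local neighborhood and its complement'' gestures at this but supplies no mechanism. The paper instead extracts the tail from Assumption~\ref{c3}: boundedness of the posterior density gives, via the Riemann--Lebesgue theorem, $|\rawcf(\tau)|\leq C/|\tau|$ and $L^m$-integrability of a power of $\rawcf$, hence $|\varphi(\zeta)|\leq C_2/(n^{1/2}|\zeta|)$, and the outer integral is then bounded by a quantity that is geometrically small in $n$. (A secondary point: you assert that Lemma~\ref{lem:cumulants} ``extends'' to cumulant orders beyond $5$; the paper proves the orders only for $j\in\{3,4,5\}$ and explicitly remarks that the cancellations required for higher-order cumulants must be checked case by case, so that extension is not free either.)
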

\begin{proof}
Let
$\rawcf(\tau)=\int_{-\infty}^\infty L_n(\theta)\pi(\theta)\exp(i\theta\tau)~d\theta$.
The characteristic function for $\theta$ is then
$\rawcf(\tau)/\rawcf(0),$ and
the characteristic function for $\vartheta_{n}=n^{1/2}(\theta-\theta_0)/\sigma$ is
\begin{equation}
\varphi(\tau)=\rawcf(n^{1/2}\tau)\exp(-n^{1/2}\theta_0\tau i)/\rawcf(0).
\end{equation}
Here $\sigma$ is the standard deviation for the posterior distribution for $\theta$.

The Riemann--Lebesgue theorem, using Assumption~\ref{c3} above, indicates that 
$\abs{\rawcf(\tau)} \leq C/\abs{\tau}$ for
$C=2\int_{-\infty}^\infty L_n(\theta)\pi(\theta)~d\theta$; see \citet[Theorem 26.1]{Billingsley:1995}.
Furthermore, by Assumption~\ref{c3}, there exist $m$ and $C_1$ and $C_2$ such that 
\begin{equation}\label{tailsmall}
\int_{-\infty}^\infty\abs{\rawcf(\tau)}^m~d\tau\leq C_1 \qquad \hbox{ and hence } \qquad \abs{\varphi(\tau)}\leq
C_2/(n^{1/2}\abs{\tau}) . \end{equation}
Then, in parallel with the development of \citet[\S XV.3]{Feller:1971}, the Fourier inversion of $\varphi$ to obtain $e_{k,n}(\vartheta)$ as the posterior density of $\vartheta_{n}$ is performed by first expanding $\phi$ in $\theta$ near $\hat\theta$.
The Fourier inversion integral results approximately in $e_{k,n}$. 

More formally,
let $\gamma_n(\zeta)=\int_{-\infty}^\infty\exp(i\zeta\vartheta) e_{k,n}(\vartheta)~d\zeta$, for $e_{k,n}(\vartheta)$ defined in (\ref{esdef}).
In parallel with the development of \citet[\S XVI.2]{Feller:1971}, 
the posterior density for $\vartheta_{n}$ is given by 
\begin{equation}
\frac{1}{2\pi}\int_{-\infty}^\infty\exp(-i\zeta\vartheta)
\varphi(\zeta)~d\zeta,
\end{equation}
and the difference between the true posterior density and the Edgeworth series approximation of (\ref{esdef}) is bounded by
\begin{eqnarray}
 \frac{1}{2\pi}\int_{-\infty}^\infty\exp(-i\zeta\vartheta)|\varphi(\zeta)&-&\gamma_n(\zeta)|~d\zeta\nonumber\\
&=& \frac{1}{2\pi}\int_{(-\delta n^{1/2},\delta n^{1/2})}\exp(-i\zeta\vartheta)|\varphi(\zeta)-\gamma_n(\zeta)|~d\zeta\nonumber\\
&\;\;\;+& \frac{1}{2\pi}\int_{(-\delta n^{1/2},\delta n^{1/2})^c}\exp(-i\zeta\vartheta)|\varphi(\zeta)-\gamma_n(\zeta)|~d\zeta . \label{intbnd}
\end{eqnarray}
As discussed by \citet[\S 3.7]{Kolassa:2006}, the first of these integrals is bounded by 
\begin{equation*}
\frac{1}{2\pi}\int_{-\infty}^\infty\exp(-i\zeta\vartheta)p(\zeta,n)/n^{k/2}~d\zeta,
\end{equation*}
for $p(\zeta,n)$ a polynomial in $\zeta$ and $1/n^{1/2}$.  This polynomial has coefficients that depend on derivatives of the loglikelihood, and so the error is uniformly of the proper order.  
Note that result (\ref{tailsmall}) applies to $\gamma_n(\zeta)$ as well as to $\varphi(\zeta)$; choose the resulting constants $m^*\geq m$, $C_1^*\geq C_1$, and $C_2^*\geq C_2$.  These together show that the second integral in (\ref{tailsmall}) is bounded by 
\begin{equation*}
(C_2^*/(\delta n^{1/2}))^{n-m^*}C_1^* n/\delta ,
\end{equation*} 
which is geometrically small.
\end{proof}

\subsection{Validity of Edgeworth expansion for the posterior distribution function}
By assuming that the prior is a density, and that the likelihood is continuously differentiable, we actually have more smoothness than is required for Cram\'{e}r's condition \eqref{eqn:Cramer} to hold. However, as noted above, the extra smoothness implied by these assumptions is necessary to prove the validity of the cumulant expansions in Lemma~\ref{lem:cumulants}.
\begin{theorem}
Under the assumptions of Theorem~\ref{thm:main}, define the Edgeworth approximation to the posterior cumulative distribution function of $\vartheta$ as 
\begin{equation}\label{escdfdef}
E_{k,n}(\vartheta)=\Phi(\vartheta)-\phi(\vartheta)\{h_2(\vartheta)\kappa_3/6+\kappa_4 h_3(\vartheta)/24-\kappa_3^2 h_5(\vartheta)/72+\cdots\} .
\end{equation} 
The absolute error incurred in using \eqref{escdfdef} to approximate the distribution function of $\vartheta_{n}$ is uniformly of order $O(n^{-3/2})$
\end{theorem}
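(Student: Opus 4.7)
The plan is to reduce the CDF expansion to the density expansion of Theorem~\ref{thm:main} via a smoothing inequality, much as Berry--Esseen reasoning reduces distribution-function approximation to characteristic-function control. The preparatory step is to verify that $E_{k,n}$ is the term-by-term antiderivative of $e_{k,n}$: the Hermite recursion $\int_{-\infty}^\vartheta \phi(u) h_{j}(u)\,du = -\phi(\vartheta) h_{j-1}(\vartheta)$ applied to each summand of (\ref{esdef}) reproduces (\ref{escdfdef}) exactly, while $\int_{-\infty}^\vartheta\phi(u)\,du=\Phi(\vartheta)$ accounts for the leading Gaussian term. This bookkeeping step is purely algebraic.

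Next, I would apply Esseen's smoothing lemma (see \citet[\S XVI.3]{Feller:1971} or \citet[\S 3.8]{Kolassa:2006}) in the form
\[
\sup_{\vartheta}|F_n(\vartheta)-E_{k,n}(\vartheta)| \;\leq\; \frac{1}{\pi}\int_{-T}^{T}\frac{|\varphi(\zeta)-\gamma_n(\zeta)|}{|\zeta|}\,d\zeta + \frac{24 M_n}{\pi T},
\]
where $M_n=\sup_\vartheta |e_{k,n}(\vartheta)|$, which is $O(1)$ because $e_{k,n}$ is $\phi$ times a polynomial whose coefficients are invariant cumulants bounded by Lemma~\ref{lem:cumulants}. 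Choosing $T$ of order $n^{3/2}$ makes the tail term $O(n^{-3/2})$, so the task reduces to bounding the characteristic-function integral at the same rate.

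I would split that integral exactly as in the proof of Theorem~\ref{thm:main}. On the central band $|\zeta|<\delta n^{1/2}$, the bound $|\varphi(\zeta)-\gamma_n(\zeta)| \leq n^{-3/2}\, p(\zeta,n)\exp(-c\zeta^2)$ derived there (instantiated at $k=3$) is integrated against $1/|\zeta|$, yielding $O(n^{-3/2})$ provided the factor $1/|\zeta|$ is integrable at the origin. On the tail region $\delta n^{1/2}\leq |\zeta|\leq T$ the denominator is uniformly bounded below, so the bound (\ref{tailsmall}) on $|\varphi|$ together with the rapid decay of $\gamma_n$ (a Schwartz function, being $\phi$ times a polynomial) yield a geometrically small contribution by the argument at the end of the proof of Theorem~\ref{thm:main}.

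The hard part will be the integrability of $(\varphi(\zeta)-\gamma_n(\zeta))/\zeta$ at the origin, which demands that the numerator vanish at $\zeta=0$. This follows because $\varphi$ and $\gamma_n$ are characteristic functions of probability measures of identical total mass, and by construction of $e_{k,n}$ their Taylor coefficients at zero agree through the invariant cumulants of order at most four; by Lemma~\ref{lem:cumulants} the first non-matching coefficient is $O(n^{-3/2})$ and carries a factor of $\zeta^5$. Consequently $(\varphi-\gamma_n)/\zeta$ is bounded near zero by $n^{-3/2}$ times a polynomial in $\zeta$ multiplied by a Gaussian, and the central integral is $O(n^{-3/2})$. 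Uniformity in $\vartheta$ is automatic, since $\vartheta$ enters only through the unimodular factor $\exp(-i\zeta\vartheta)$, and uniformity in $\bx$ over compact subsets of the sample space is inherited from the corresponding uniformity in Theorem~\ref{thm:main}.
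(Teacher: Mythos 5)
Your proposal is correct and follows essentially the same route as the paper: the paper also reduces the CDF error to the characteristic-function integral of Theorem~\ref{thm:main} with the integrand divided by $|\zeta|$, reusing the same central/tail splitting and the Riemann--Lebesgue bound. Your version is merely more explicit -- invoking Esseen's smoothing lemma with a truncation at $T\sim n^{3/2}$ and spelling out why $(\varphi-\gamma_n)/\zeta$ is integrable at the origin -- where the paper leaves these standard details implicit.
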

\begin{proof}
As noted by \citet[Equation (48)]{Kolassa:2006}, the error in applying (\ref{escdfdef}) to approximate the posterior distribution function is given by the left side of  (\ref{intbnd}), modified by dividing the integrand by $|\zeta|$, in this case with a density.  By the previous application of the Riemann-Lebesgue theorem, the modified integral representing error converges absolutely, and is multiplied by the proper power of the sample size.
\end{proof}

\section{Relationship to Existing Edgeworth-type Expansions}
\label{sec:comparison}

\citet{Weng:2010} provides an asymptotic expansion for the posterior distribution of a statistical model for data consisting of $n$ independent and identically distributed observations, satisfying certain regularity conditions, by centering the distribution at the maximum likelihood estimate, and scaling the difference between a potential parameter value and the estimate by the second derivative of the loglikelihood evaluated at the maximum likelihood.  This produces an asymptotic expansion valid to $O(n^{-(s+1)/2})$, and uses $3 s-1$ terms.  For example, when approximating the posterior CDF, the approximation with error $O(n^{-3/2})$ uses $s=2$, and hence uses five terms, including the leading term represented by the normal cumulative distribution function.  This expansion includes Hermite polynomials to order 5, as is found in the standard Edgeworth expansion presented by, for example, \citet{McCullagh:1987}.  However, since the maximum likelihood estimate is not the same as the posterior expectation, the leading term in the Weng approximation does not match the target distribution as well as one centered at the true posterior expectation.  This lack of match leads to a more complicated expansion.  Furthermore, the example Weng presents provides finite sample performance that is inferior to that generally expected from an approximation with asymptotic error $O(n^{-3/2})$, as we illustrate in \S~\ref{sec:example}.

We now demonstrate that Weng's approximation has an error that is equivalent to that of our approximation; these calculations also demonstrate the differences between the two approximatons.
Weng's approximation for the posterior distribution function of $\tilde{\vartheta}_{n}=(\theta-\hat{\theta}_n)/\hat{\sigma}$, where $\hat{\theta}_n$ is the maximum likelihood estimate and $\hat{\sigma}$ is the square root of the observed information evaluated at $\hat{\theta}_n$, is of the form 
\begin{equation*}
P[(\theta-\hat\theta)/\hat\sigma<\newtheta | \bx]=
\Phi(\newtheta)-\sum_{i=1}^{3 s}q_{i-1}(\newtheta)\phi(\newtheta) c_i,\end{equation*}
for $q_i$ Hermite polynomials, and $c_i$ constants given by Stein's lemma.
\citet{Weng:2010} shows that this approximation holds uniformly for $\tilde{\vartheta}\in \Re$.

\citet{Hartigan:1965} also provides an approximation to the posterior, in this case to the density, and obtains an approximation of a similar form.  This approximation is also about a center other than the posterior expectation; in this case, the expansion is in the neighborhood of a true parameter value.
The notation here is similar to that of \citet{Hartigan:1965}.  Suppose data $X_1,\ldots,X_n$ is observed, with observations independent and identically distributed, conditional on a scalar parameter $\theta$, with common log density $g(x_{i}|\theta)$ and log prior density $h(\theta)$.  
Let $\omega_j=\int_\Theta \theta^j\exp(h(\theta)+\sum_{i=1}^n g(x_i|\theta)~d\theta$; these quantities depend on the data.
The posterior expectation is then $\omega_1/\omega_0$.
Take $s=2$; then Weng's approximation to the posterior distribution function is 
\begin{equation*}\Phi(\newtheta)+\phi(\newtheta)\{
c_1+ \newtheta c_2+
(\newtheta^2-1) c_3+
(\newtheta^3-\newtheta) c_4+
(\newtheta^5-10\newtheta^3+15\newtheta) c_6\},
\end{equation*}
and to the density is
\begin{align*}
w(\newtheta)=\phi(\newtheta)-\phi(\newtheta)\{ \newtheta c_1+ (\newtheta^2-1) c_2+
(\newtheta^3-\newtheta) c_3+ (\newtheta^4-6\newtheta^2+3) c_4 \\
+(\newtheta^6-15\newtheta^4+45\newtheta^2-15) c_6\},
\end{align*}
uniformly (in $\theta$) to $O(n^{-3/2})$.
Then the expectation associated with this density approximation is $-c_1$, the variance associated with this approximation is $1-c_1^2-2 c_2$, 
and the approximation to the density $d(\rho)$ of $\rho=(\theta-\hat\theta_n)/\hat\sigma-c_1$
satisfies 
\begin{equation}\label{corrden}
d(\rho)=w(\rho (1-c_1^2-2 c_2)^{1/2}-c_1) (1-c_1^2-2 c_2)^{1/2}+O(n^{-3/2}).\end{equation}
\citet{Weng:2010} notes that 
\begin{equation}\label{coeforder}
c_1, c_3=O(n^{-1/2}), c_2, c_4, c_6=O(n^{-1}).\end{equation}
Expanding $w(\rho (1-c_1^2-2 c_2)^{1/2}-c_1) (1-c_1^2-2 c_2)^{1/2}$ in terms of powers of $n^{1/2}$, and bounding errors using, for example, \citet[Theorem 2.5.3]{Kolassa:2006},  one can exhibit $w(\rho (1-c_1^2-2 c_2)^{1/2}-c_1) (1-c_1^2-2 c_2)^{1/2}$ of the form 
\begin{align}
\phi(\rho)-\phi(\rho)\{ \rho c^*_1+ (\rho^2-1) c^*_2+
(\rho^3-\rho) c^*_3+ (\rho^4-6\rho^2+3) c^*_4+\nonumber\\
(\rho^6-15\rho^4+45\rho^2-15) c^*_6\}\nonumber
\end{align}
for constants $c^*_j$ satisfying (\ref{coeforder}), and furthermore, $c^*_1=c^*_2=0$. Hence (\ref{corrden}) is an Edgeworth expansion to $O(n^{-3/2})$.

\begin{remark}
We have given two proofs of the validity of Edgeworth expansion of the posterior. The first is a direct proof for the formal Edgeworth expansion (Theorem~\ref{thm:main}), while the second is not a formal Edgeworth expansion, but rather shows how to correct Weng's expansion due to using the wrong center. As the above arguments demonstrate, one can obtain an Edgeworth-type expansion by centering at the maximum likelihood estimate $\hat{\theta}_{n}$ and correcting. Such an expansion could have the same form as an Edgeworth series, but would not be a formal Edgeworth expansion, and would require additional work to compute the correction factors.
\end{remark}
\begin{remark}
We have used the Laplace approximation of the cumulants only to show that they are of the correct asymptotic order. It is not necessary to use the Laplace approximation for implementation of the expansion. In practice, any sufficiently accurate estimator of the posterior moments could be used to implement the Edgeworth expansion. One approach is given by \citet{Hartigan:1965}. Another is to use the constants given by Weng and adjust them accordingly. 
\end{remark}
\begin{remark}
Weng's expansion is actually more similar to a Gram-Charlier expansion of the posterior, not an Edgeworth expansion. Weng ensures that the pseudo-moments are of the correct order, but not the cumulants. For example, in her displayed equation (45), the set $J_{2}$ (corresponding to the $n^{-1}$ term in the expansion) includes the sixth Hermite polynomial and its multiplier. For an Edgeworth expansion, this term is discarded.
\end{remark}

\section{Example}
\label{sec:example}

Consider a random variable having a binomial distribution, $X\sim \text{Bin}(\theta, n)$ with a beta prior, $\theta \sim \text{Beta}(a,b)$. Suppose that $a=0.5$, $b=4.0$, $n=5$ and $x=2$. This example was previously considered by \citet{Weng:2010}, who, using Stein's identity, derived an asymptotic expansion using up to 40 moments. This is an ideal example for illustrating the performance of posterior expansions, because the sample size is small, and the normal approximation is inaccurate, due to the skewness in the posterior.

\begin{figure}[htp] 
\caption{\label{fig:1}
Density approximation of Weng (2010).}
\centering
\includegraphics[width=0.7\textwidth]{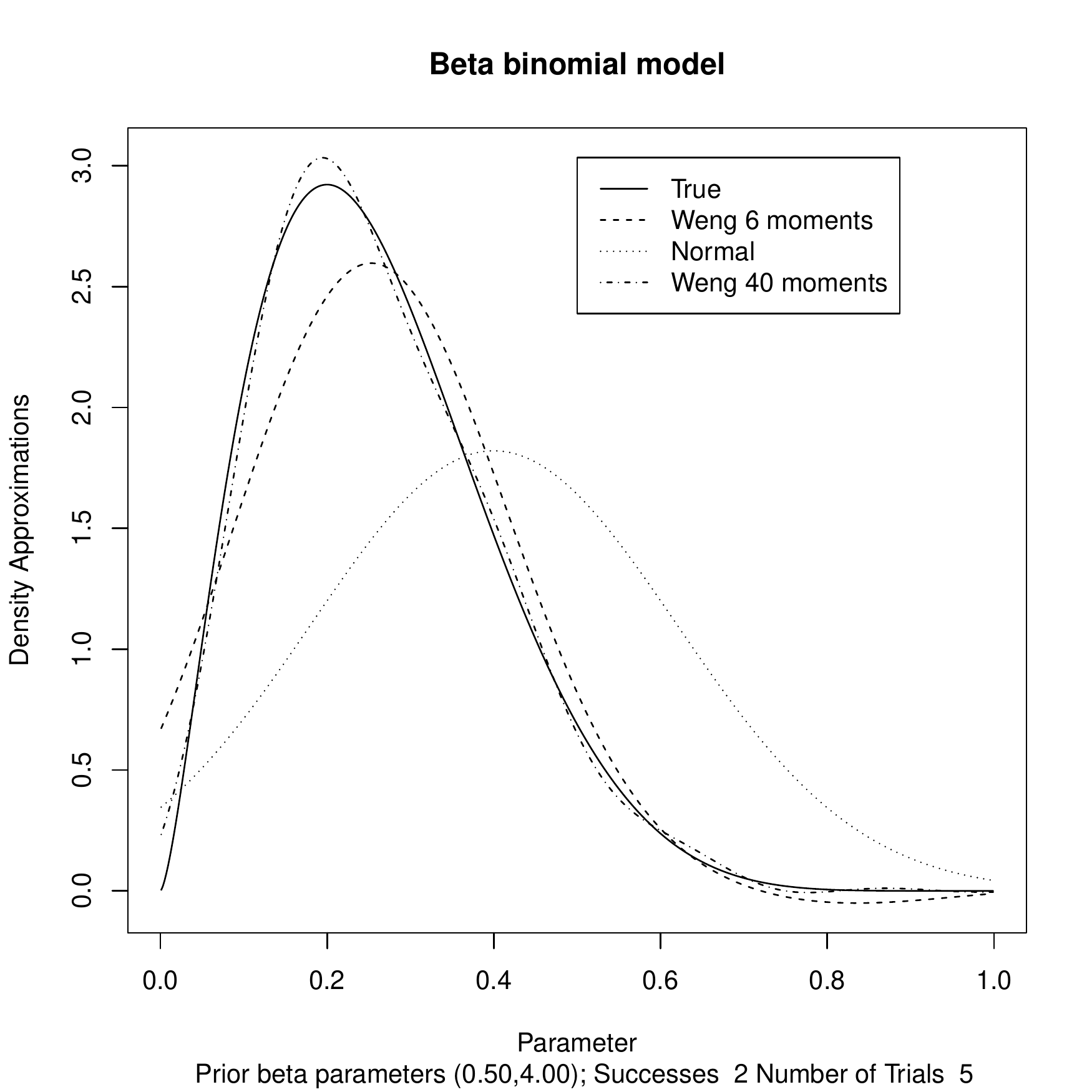}
\end{figure}

\begin{figure}[htp] 
\caption{\label{fig:2}
Edgeworth approximation of the posterior density.}
\centering
\includegraphics[width=0.7\textwidth]{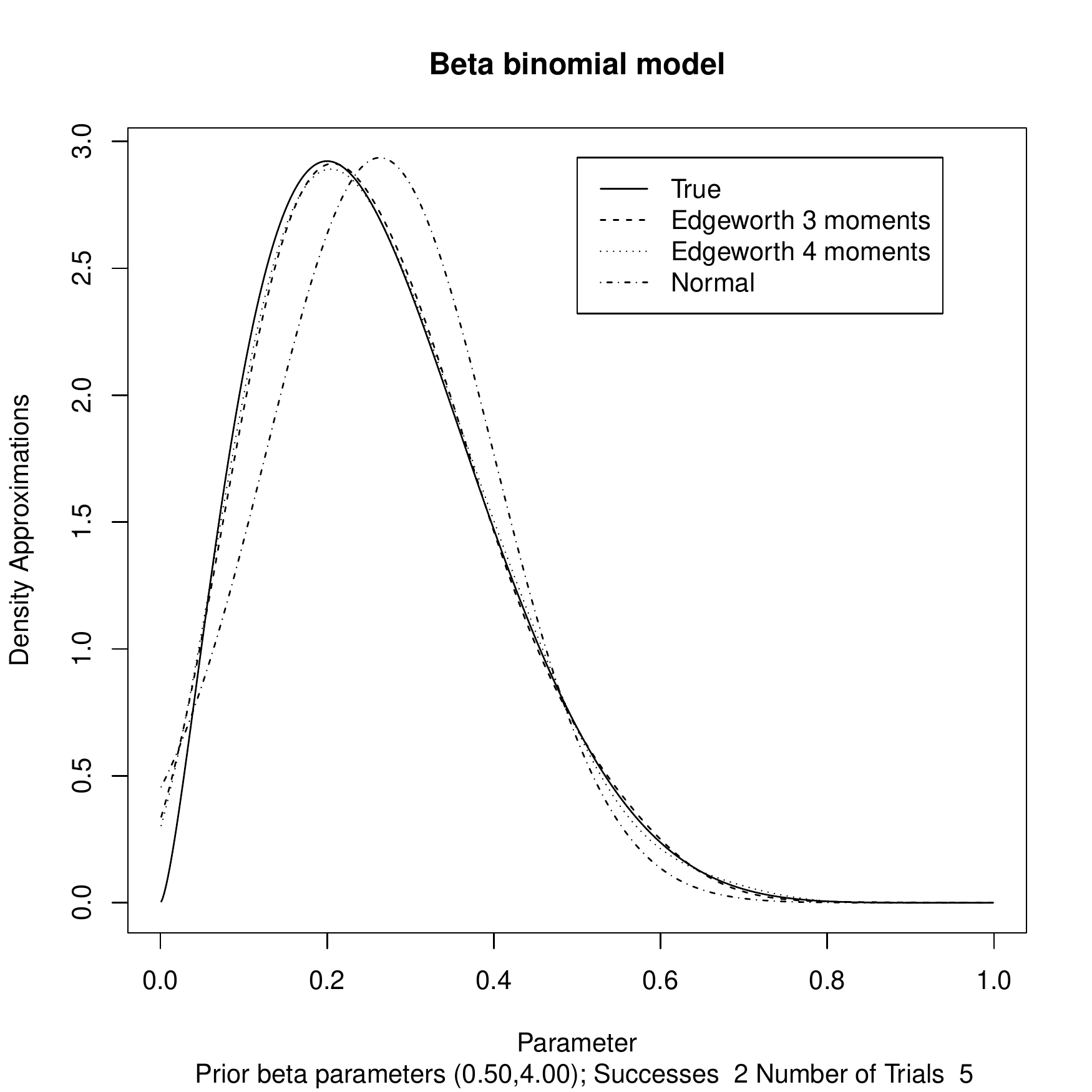}
\end{figure}

 Weng's expansion, for the density of $\theta$, is given in Figure~\ref{fig:1}. This figure should be compared with Figure~\ref{fig:2}, showing our posterior Edgeworth approximation for the posterior density of $\theta$. Note that the standard Edgeworth approximation, with four moments, behaves better than the Stein's identity approach using 40 moments.
To understand this phenomenon, consider the formal Edgeworth derivation due to \citet{Davis:1976} and presented by \citet{McCullagh:1987}.
When constructing an approximation of a density $f$ around a baseline density $g$, obtain the formal series
\[
f(x)=g(x)\sum_{j=0}^{\infty}h_{j}(x)\mu^*_{j}/j!,
\]
where the functions $h_{j}$ are ratios of derivatives of $g$ to $g$ itself, and the coefficients $\mu^*_{j}$ represent the results of calculating differences in cumulants between $f$ and $g$, and applying the standard relationship giving moments from cumulants to these cumulant differences to get pseudo-cumulants.  Standard Edgeworth approximation techniques and the method of \citet{Weng:2010} use as $g$ a normal density; standard Edgeworth approximations use $g$ with mean and variance matching $f$.  When applying Edgeworth techniques to a posterior, then, the standard approach is to match the mean and variance.  \citet{Weng:2010} uses instead the maximum likelihood estimate and its usual standard error, and so convergence is slower.

Figure~\ref{fig:3} displays the absolute error the normal approximation, and the Edgeworth approximation of the posterior density.

\begin{figure}[htp] 
\caption{\label{fig:3}
Absolute error of density approximations.}
\centering
\includegraphics[width=0.7\textwidth]{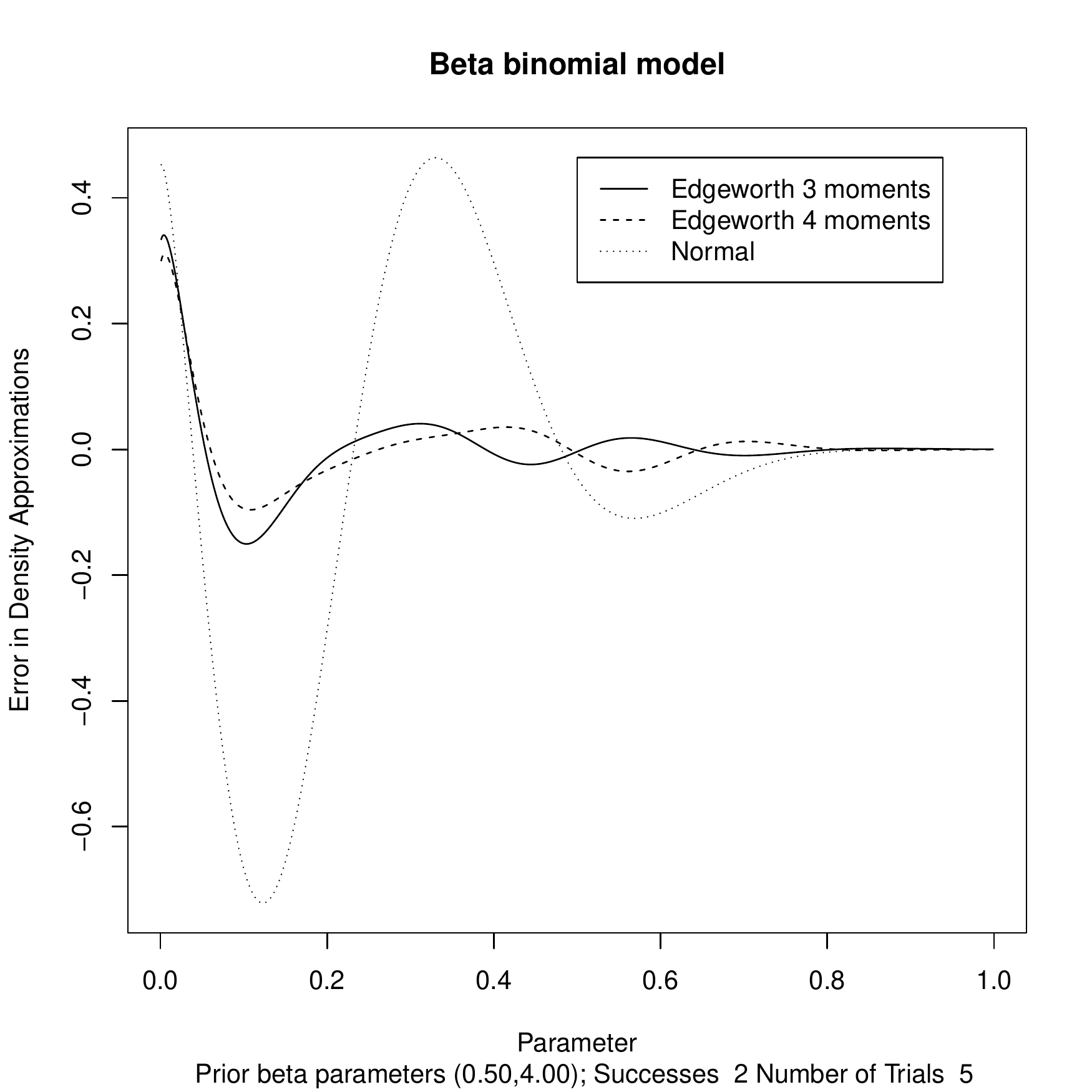}
\end{figure}

\section*{Acknowledgement}
Both authors were partially supported by the National Science Foundation. The second author is grateful to J.~K. Ghosh, Jens Jensen, Trevor Sweeting and Alastair Young for helpful correspondence and discussion related to this topic during 2010-2011.

\bibliographystyle{plainnat}
\bibliography{edgepostrefs}

\begin{thebibliography}{49}
\providecommand{\natexlab}[1]{#1}
\providecommand{\url}[1]{\texttt{#1}}
\expandafter\ifx\csname urlstyle\endcsname\relax
  \providecommand{\doi}[1]{doi: #1}\else
  \providecommand{\doi}{doi: \begingroup \urlstyle{rm}\Url}\fi

\bibitem[Barndorff-Nielsen and Cox(1979)]{BNCox:1979}
O.~Barndorff-Nielsen and D.~R. Cox.
\newblock Edgeworth and saddle-point approximations with statistical
  applications.
\newblock \emph{Journal of the Royal Statistical Society. Series B},
  41\penalty0 (3):\penalty0 279--312, 1979.

\bibitem[Barndorff-Nielsen and Cox(1989)]{BNCox:1989}
O.~E. Barndorff-Nielsen and D.~R. Cox.
\newblock \emph{Asymptotic Techniques for Use in Statistics}.
\newblock Chapman \& Hall, 1989.

\bibitem[Bhattacharya and Ghosh(1978)]{BhattacharyaGhosh:1978}
R.~N. Bhattacharya and J.~K. Ghosh.
\newblock On the validity of the formal {E}dgeworth expansion.
\newblock \emph{Ann. Statist.}, 6\penalty0 (2):\penalty0 434--451, 1978.
\newblock ISSN 0090-5364.
\newblock URL
  \url{http://links.jstor.org/sici?sici=0090-5364(197803)6:2<434:OTVOTF>2.0.CO;2-M&origin=MSN}.

\bibitem[Bhattacharya and Rao(2010)]{BhattacharyaRao:2010}
Rabi~N. Bhattacharya and R.~Ranga Rao.
\newblock \emph{Normal Approximation and Asymptotic Expansions}.
\newblock Society for Industrial and Applied Mathematics (SIAM), Philadelphia,
  PA, updated and correction version of 1976 original. edition, 2010.
\newblock ISBN 978-0-898718-97-3.
\newblock \doi{10.1137/1.9780898719895.ch1}.
\newblock URL \url{http://dx.doi.org/10.1137/1.9780898719895.ch1}.

\bibitem[Bickel and Ghosh(1990)]{BickelGhosh:1990}
Peter~J. Bickel and J.~K. Ghosh.
\newblock A decomposition for the likelihood ratio statistic and the {B}artlett
  correction---a {B}ayesian argument.
\newblock \emph{Ann. Statist.}, 18\penalty0 (3):\penalty0 1070--1090, 1990.
\newblock ISSN 0090-5364.
\newblock \doi{10.1214/aos/1176347740}.
\newblock URL \url{http://dx.doi.org/10.1214/aos/1176347740}.

\bibitem[Bickel et~al.(1985)Bickel, G\"{o}tze, and van Zwet]{BGV:1985}
Peter~J. Bickel, F.~G\"{o}tze, and W.~R. van Zwet.
\newblock A simple analysis of third-order efficiency of estimates.
\newblock In L.M.~Le Cam and R.A. Olshen, editors, \emph{Proceedings of the
  {B}erkeley conference in honor of {J}erzy {N}eyman and {J}ack {K}iefer,
  {V}ol.\ {II} ({B}erkeley, {C}alif., 1983)}, pages 749--768. Wadsworth,
  Belmont, CA, 1985.

\bibitem[Billingsley(1995)]{Billingsley:1995}
Patrick Billingsley.
\newblock \emph{Probability and Measure}.
\newblock John Wiley \& Sons, Inc., New York, 3rd edition, 1995.
\newblock ISBN 0-471-00710-2.
\newblock A Wiley-Interscience Publication.

\bibitem[Brillinger(1969)]{Brillinger:1969}
D.~R. Brillinger.
\newblock The calculation of cumulants via conditioning.
\newblock \emph{Annals of the Institute of Statistical Mathematics},
  21:\penalty0 215--218, 1969.

\bibitem[Chang and Mukerjee(2006)]{ChangMukerjee:2006}
In~Hong Chang and Rahul Mukerjee.
\newblock Probability matching property of adjusted likelihoods.
\newblock \emph{Statist. Probab. Lett.}, 76\penalty0 (8):\penalty0 838--842,
  2006.
\newblock ISSN 0167-7152.
\newblock \doi{10.1016/j.spl.2005.10.015}.
\newblock URL \url{http://dx.doi.org/10.1016/j.spl.2005.10.015}.

\bibitem[DasGupta(2008)]{DasGupta:2008}
Anirban DasGupta.
\newblock \emph{Asymptotic Theory of Statistics and Probability}.
\newblock Springer, 2008.

\bibitem[Datta and Mukerjee(2004)]{DattaMukerjee:2004}
Gauri~Sankar Datta and Rahul Mukerjee.
\newblock \emph{Probability Matching Priors: Higher Order Asymptotics}.
\newblock Springer-Verlag, 2004.
\newblock ISBN 0-387-20329-X.
\newblock \doi{10.1007/978-1-4612-2036-7}.
\newblock URL \url{http://dx.doi.org/10.1007/978-1-4612-2036-7}.

\bibitem[Davis(1976)]{Davis:1976}
A.~W. Davis.
\newblock Statistical distributions in univariate and multivariate {E}dgeworth
  populations.
\newblock \emph{Biometrika}, 63\penalty0 (3):\penalty0 661--670, 1976.
\newblock ISSN 0006-3444.
\newblock \doi{10.1093/biomet/63.3.661}.
\newblock URL \url{http://dx.doi.org/10.1093/biomet/63.3.661}.

\bibitem[Davison(1986)]{Davison:1986}
A.~C. Davison.
\newblock Approximate predictive likelihood.
\newblock \emph{Biometrika}, 73\penalty0 (2):\penalty0 323--332, 1986.

\bibitem[DiCiccio and Stern(1993)]{DiCiccioStern:1993}
T.~J. DiCiccio and S.~E. Stern.
\newblock On {B}artlett adjustments for approximate {B}ayesian inference.
\newblock \emph{Biometrika}, 80\penalty0 (4):\penalty0 731--740, 1993.

\bibitem[DiCiccio and Stern(1994)]{DiCiccioStern:1994}
T.~J. DiCiccio and S.~E. Stern.
\newblock Frequentist and {B}ayesian {B}artlett correction of test statistics
  based on adjusted profile likelihoods.
\newblock \emph{Journal of the Royal Statistical Society. Series B},
  56\penalty0 (2):\penalty0 397--408, 1994.

\bibitem[Feller(1971)]{Feller:1971}
William Feller.
\newblock \emph{An Introduction to Probability Theory and its Applications.
  {V}ol. {II}}.
\newblock 2nd. John Wiley \& Sons, Inc., New York-London-Sydney, 1971.

\bibitem[Ghosh et~al.(1982)Ghosh, Sinha, and Joshi]{GSJ:1982}
J.~K. Ghosh, B.~K. Sinha, and S.~N. Joshi.
\newblock Expansions for posterior probability and integrated {B}ayes risk.
\newblock In S.~Gupta and J.~Berger, editors, \emph{Statistical Decision Theory
  and Related Topics, {III}, {V}ol. 1 ({W}est {L}afayette, {I}nd., 1981)},
  pages 403--456. Academic Press, New York-London, 1982.

\bibitem[Ghosh(1994)]{Ghosh:1994}
J.K. Ghosh.
\newblock \emph{Higher-Order Asymptotics}.
\newblock Hayward: Institute of Mathematical Statistics, 1994.

\bibitem[Hall(1992)]{Hall:1992}
Peter Hall.
\newblock \emph{{The Bootstrap and Edgeworth Expansion}}.
\newblock Springer Series in Statistics. Springer-Verlag, New York, 1992.
\newblock ISBN 0-387-97720-1.
\newblock \doi{10.1007/978-1-4612-4384-7}.
\newblock URL \url{http://dx.doi.org/10.1007/978-1-4612-4384-7}.

\bibitem[Hartigan(1965)]{Hartigan:1965}
J.~A. Hartigan.
\newblock The asymptotically unbiased prior distribution.
\newblock \emph{Ann. Math. Statist.}, 36:\penalty0 1137--1152, 1965.
\newblock ISSN 0003-4851.
\newblock \doi{10.1214/aoms/1177699988}.
\newblock URL \url{http://dx.doi.org/10.1214/aoms/1177699988}.

\bibitem[James(1955)]{James:1955}
G.~S. James.
\newblock Cumulants of a transformed variate.
\newblock \emph{Biometrika}, 42:\penalty0 529--531, 1955.
\newblock ISSN 0006-3444.
\newblock \doi{10.1093/biomet/42.3-4.529}.
\newblock URL \url{http://dx.doi.org/10.1093/biomet/42.3-4.529}.

\bibitem[James(1958)]{James:1958}
G.~S. James.
\newblock On moments and cumulants of systems of statistics.
\newblock \emph{Sankhy\=a}, 20:\penalty0 1--30, 1958.
\newblock ISSN 0972-7671.

\bibitem[James and Mayne(1962)]{JamesMayne:1962}
G.~S. James and Alan~J. Mayne.
\newblock Cumulants of functions of random variables.
\newblock \emph{Sankhy\=a Ser. A}, 24:\penalty0 47--54, 1962.
\newblock ISSN 0581-572X.

\bibitem[Jensen(1995)]{Jensen:1995}
Jens~Ledet Jensen.
\newblock \emph{{Saddlepoint Approximations}}.
\newblock Oxford University Press, 1995.
\newblock ISBN 0-19-852295-9.
\newblock Oxford Science Publications.

\bibitem[Johnson(1967)]{Johnson:1967}
R.~A. Johnson.
\newblock An asymptotic expansion for posterior distributions.
\newblock \emph{Ann. Math. Statist.}, 38:\penalty0 1899--1906, 1967.
\newblock ISSN 0003-4851.
\newblock \doi{10.1214/aoms/1177698624}.
\newblock URL \url{http://dx.doi.org/10.1214/aoms/1177698624}.

\bibitem[Johnson(1970)]{Johnson:1970}
Richard~A. Johnson.
\newblock Asymptotic expansions associated with posterior distributions.
\newblock \emph{Ann. Math. Statist.}, 41:\penalty0 851--864, 1970.
\newblock ISSN 0003-4851.
\newblock \doi{10.1214/aoms/1177696963}.
\newblock URL \url{http://dx.doi.org/10.1214/aoms/1177696963}.

\bibitem[Kass et~al.(1990)Kass, Tierney, and Kadane]{KTK:1990}
Robert~E. Kass, Luke Tierney, and Joseph~P. Kadane.
\newblock {The validity of posterior expansions based on Laplace's method}.
\newblock In S.~Geisser, J.~S. Hodges, S.~J. Press, and A.~Zellner, editors,
  \emph{{Bayesian and Likelihood Methods in Statistics and Econometrics}},
  pages 473--488. Elsevier Science Publishers B.V., 1990.

\bibitem[Kharroubi and Sweeting(2016)]{KharroubiSweeting:2016}
S.~A. Kharroubi and T.~J. Sweeting.
\newblock Exponential tilting in {B}ayesian asymptotics.
\newblock \emph{Biometrika}, 103\penalty0 (2):\penalty0 337--349, 2016.
\newblock ISSN 0006-3444.
\newblock \doi{10.1093/biomet/asw018}.
\newblock URL \url{http://dx.doi.org/10.1093/biomet/asw018}.

\bibitem[Kolassa(2006)]{Kolassa:2006}
John~E. Kolassa.
\newblock \emph{Series Approximation Methods in Statistics}.
\newblock Springer-Verlag, 3rd edition, 2006.
\newblock ISBN 0-387-94277-7.
\newblock \doi{10.1007/978-1-4757-4275-6}.
\newblock URL \url{http://dx.doi.org/10.1007/978-1-4757-4275-6}.

\bibitem[Leonov and Shiryaev(1959)]{LeonovShiryaev:1959}
V.~P. Leonov and A.~N. Shiryaev.
\newblock On a method of semi-invariants.
\newblock \emph{Theor. Probability Appl.}, 4:\penalty0 319--329, 1959.

\bibitem[Lindley(1961)]{Lindley:1961}
D.~V. Lindley.
\newblock The use of prior probability distributions in statistical inference
  and decisions.
\newblock In \emph{Proc. 4th {B}erkeley {S}ympos. {M}ath. {S}tatist. and
  {P}rob., {V}ol. {I}}, pages 453--468. Univ. California Press, Berkeley,
  Calif., 1961.

\bibitem[Lindley(1980)]{Lindley:1980}
Dennis~V Lindley.
\newblock Approximate {B}ayesian methods.
\newblock \emph{Trabajos de Estad\'{i}stica y de Investigaci\'{o}n Operativa},
  31\penalty0 (1):\penalty0 223--245, 1980.

\bibitem[McCullagh(1987)]{McCullagh:1987}
Peter McCullagh.
\newblock \emph{Tensor Methods in Statistics}.
\newblock Chapman \& Hall, London, 1987.
\newblock ISBN 0-412-27480-9.

\bibitem[Mykland(1999)]{Mykland:1999}
Per~Aslak Mykland.
\newblock Bartlett identities and large deviations in likelihood theory.
\newblock \emph{Ann. Statist.}, 27\penalty0 (3):\penalty0 1105--1117, 1999.
\newblock ISSN 0090-5364.
\newblock \doi{10.1214/aos/1018031270}.
\newblock URL \url{http://dx.doi.org/10.1214/aos/1018031270}.

\bibitem[Pericchi et~al.(1993)Pericchi, Sans{\'o}, and Smith]{PSS:1993}
L.~R. Pericchi, B.~Sans{\'o}, and A.~F.~M. Smith.
\newblock Posterior cumulant relationships in {B}ayesian inference involving
  the exponential family.
\newblock \emph{J. Amer. Statist. Assoc.}, 88\penalty0 (424):\penalty0
  1419--1426, 1993.
\newblock ISSN 0162-1459.
\newblock URL
  \url{http://links.jstor.org/sici?sici=0162-1459(199312)88:424<1419:PCRIBI>2.0.CO;2-0&origin=MSN}.

\bibitem[Ruli et~al.(2014)Ruli, Sartori, and Ventura]{RSV:2014}
Erlis Ruli, Nicola Sartori, and Laura Ventura.
\newblock Marginal posterior simulation via higher-order tail area
  approximations.
\newblock \emph{Bayesian Anal.}, 9\penalty0 (1):\penalty0 129--145, 2014.
\newblock ISSN 1936-0975.
\newblock \doi{10.1214/13-BA851}.
\newblock URL \url{http://dx.doi.org/10.1214/13-BA851}.

\bibitem[Speed(1983)]{Speed:1983}
T.~P. Speed.
\newblock Cumulants and partition lattices.
\newblock \emph{Australian Journal of Statistics}, 25:\penalty0 378--388, 1983.

\bibitem[Stuart and Ord(1994)]{StuartOrd:1994}
Alan Stuart and J.~Keith Ord.
\newblock \emph{Kendall's Advanced Theory of Statistics {V}ol. 1}.
\newblock John Wiley \& Sons, 6th edition, 1994.
\newblock ISBN 0-340-61430-7.

\bibitem[Sweeting(1995)]{Sweeting:1995a}
Trevor~J. Sweeting.
\newblock A framework for {B}ayesian and likelihood approximations in
  statistics.
\newblock \emph{Biometrika}, 82\penalty0 (1):\penalty0 1--23, 1995.
\newblock ISSN 0006-3444.
\newblock \doi{10.1093/biomet/82.1.1}.
\newblock URL \url{http://dx.doi.org/10.1093/biomet/82.1.1}.

\bibitem[Tierney and Kadane(1986)]{TierneyKadane:1986}
Luke Tierney and Joseph~B Kadane.
\newblock Accurate approximations for posterior moments and marginal densities.
\newblock \emph{Journal of the American Statistical Association}, 81\penalty0
  (393):\penalty0 82--86, 1986.

\bibitem[van~der Vaart(1998)]{vanderVaart:1998}
A.~W. van~der Vaart.
\newblock \emph{Asymptotic Statistics}.
\newblock Cambridge University Press, 1998.

\bibitem[Wallace(1958)]{Wallace:1958}
David~L. Wallace.
\newblock Asymptotic approximations to distributions.
\newblock \emph{Ann. Math. Statist.}, 29:\penalty0 635--654, 1958.
\newblock ISSN 0003-4851.
\newblock \doi{10.1214/aoms/1177706528}.
\newblock URL \url{http://dx.doi.org/10.1214/aoms/1177706528}.

\bibitem[Weng(2003)]{Weng:2003}
Ruby~C. Weng.
\newblock On {S}tein's identity for posterior normality.
\newblock \emph{Statist. Sinica}, 13\penalty0 (2):\penalty0 495--506, 2003.
\newblock ISSN 1017-0405.

\bibitem[Weng(2010)]{Weng:2010}
Ruby~C. Weng.
\newblock A {B}ayesian {E}dgeworth expansion by {S}tein's identity.
\newblock \emph{Bayesian Anal.}, 5\penalty0 (4):\penalty0 741--763, 2010.
\newblock ISSN 1936-0975.
\newblock \doi{10.1214/10-BA526}.
\newblock URL \url{http://dx.doi.org/10.1214/10-BA526}.

\bibitem[Weng and Tsai(2008)]{WengTsai:2008}
Ruby~C. Weng and Wen-Chi Tsai.
\newblock Asymptotic posterior normality for multiparameter problems.
\newblock \emph{J. Statist. Plann. Inference}, 138\penalty0 (12):\penalty0
  4068--4080, 2008.
\newblock ISSN 0378-3758.
\newblock \doi{10.1016/j.jspi.2008.03.034}.
\newblock URL \url{http://dx.doi.org/10.1016/j.jspi.2008.03.034}.

\bibitem[Withers(1982)]{Withers:1982}
C.~S. Withers.
\newblock Second order inference for asymptotically normal random variables.
\newblock \emph{Sankhy\=a Series B}, 44\penalty0 (1):\penalty0 19--27, 1982.

\bibitem[Withers(1984)]{Withers:1984}
C.~S. Withers.
\newblock Asymptotic expansions for distributions and quantiles with power
  series cumulants.
\newblock \emph{Journal of the Royal Statistical Society. Series B},
  46\penalty0 (3):\penalty0 389--396, 1984.

\bibitem[Woodroofe(1989)]{Woodroofe:1989}
Michael Woodroofe.
\newblock Very weak expansions for sequentially designed experiments: linear
  models.
\newblock \emph{Ann. Statist.}, 17\penalty0 (3):\penalty0 1087--1102, 1989.
\newblock ISSN 0090-5364.
\newblock \doi{10.1214/aos/1176347257}.
\newblock URL \url{http://dx.doi.org/10.1214/aos/1176347257}.

\bibitem[Woodroofe(1992)]{Woodroofe:1992}
Michael Woodroofe.
\newblock Integrable expansions for posterior distributions for one-parameter
  exponential families.
\newblock \emph{Statist. Sinica}, 2\penalty0 (1):\penalty0 91--111, 1992.
\newblock ISSN 1017-0405.

\end{thebibliography}

\end{document}